\pdfoutput=1
\RequirePackage{ifpdf}
\ifpdf 
\documentclass[pdftex]{sigma}
\else
\documentclass{sigma}
\fi

\def\bm{\mathbf}

\numberwithin{equation}{section}

\newtheorem{Theorem}{Theorem}[section]
\newtheorem{Corollary}[Theorem]{Corollary}

\newtheorem{Proposition}[Theorem]{Proposition}

\begin{document}

\newcommand{\arXivNumber}{1807.08184}

\renewcommand{\PaperNumber}{004}

\FirstPageHeading

\ShortArticleName{Relations between Schoenberg Coefficients on Real and Complex Spheres}

\ArticleName{Relations between Schoenberg Coefficients\\ on Real and Complex Spheres of Different Dimensions}

\Author{Pier Giovanni BISSIRI~$^\dag$, Valdir A.~MENEGATTO~$^\ddag$ and Emilio PORCU~$^{\dag\S}$}

\AuthorNameForHeading{P.G.~Bissiri, V.A.~Menegatto and E.~Porcu}

\Address{$^\dag$~School of Mathematics $\&$ Statistics, Newcastle University,\\
\hphantom{$^\dag$}~Newcastle Upon Tyne, NE1 7RU, UK}
\EmailD{\href{pier-giovanni.bissiri@newcastle.ac.uk}{pier-giovanni.bissiri@newcastle.ac.uk}, \href{mailto:emilio.porcu@newcastle.ac.uk}{emilio.porcu@newcastle.ac.uk}}

\Address{$^\ddag$~Instituto de Ci\^encias Matem\'aticas e de Computa\c{c}\~ao, Universidade de S\~ao Paulo,\\
\hphantom{$^\ddag$}~Caixa Postal 668, 13560-970, S\~ao Carlos - SP, Brazil}
\EmailD{\href{menegatt@icmc.usp.br}{menegatt@icmc.usp.br}}

\Address{$^\S$~Department of Mathematics, University of Atacama, Copiap\'o, Chile}

\ArticleDates{Received July 25, 2018, in final form January 18, 2019; Published online January 23, 2019}

\Abstract{Positive definite functions on spheres have received an increasing interest in many branches of mathematics and statistics. In particular, the Schoenberg sequences in the spectral representation of positive definite functions have been studied by several mathe\-maticians in the last years. This paper provides a set of relations between Schoenberg sequences defined over real as well as complex spheres of different dimensions. We illustrate our findings describing an application to strict positive definiteness.}

\Keywords{positive definite; Schoenberg pair; spheres; strictly positive definite}

\Classification{33C45; 42A16; 42A82; 42C10}

\section{Introduction}\label{s-introd}

Positive definite functions on real and complex spheres have a long history, that starts with the seminal paper by Schoenberg \cite{schoenberg}. Positive definiteness is crucial to many branches of mathematical analysis
\cite{Barbosa,beatson,chen,gangolli,Guella3,guella1,Guella2,hannan,Menegatto-strict,Menegatto-strict2,menegatto-peron,schoenberg} and statistics \cite{baldi-marinucci,berg-porcu,erosthe2,erosthe, bogaert,papanicolau,clarke,gerber,Hansen,hitz,huang,istas,lang-schwab,leonenko,Maliarenko,moller,PBG}. Recent reviews on positive definite functions on either spheres or product spaces involving spheres can be found in \cite{gneiting2} and in \cite{PAF18} as well.

Fourier analysis on spheres is related to the so called Schoenberg sequences (also called sequences of Schoenberg coefficients in~\cite{daley-porcu}) that are related to the dimension where any positive definite function on real or complex spheres is defined. There has been a recent interest on Schoenberg sequences, especially after the list of research problems in~\cite{gneiting2} and in~\cite{PAF18}. Recursive relations between Schoenberg coefficients on $d$-dimensional spheres have been first proposed by~\cite{gneiting2}. Fiedler~\cite{fiedler} has then solved an open problem in~\cite{gneiting2}, related to other types of recursions involving Schoenberg coefficients. Ziegel \cite{ziegel} has used Schoenberg sequences to find the convolution roots of positive definite functions on spheres and illustrated the differentiability properties of positive definite functions on spheres through their Schoenberg sequences in~\cite{ziegel2}. Recently, Arafat et al.~\cite{arafat2} have solved Gneiting's research problem number 3 making extensive use of Schoenberg sequences. Projections from Hilbert into finite-dimensional spheres have been considered in~\cite{moller}. Finally, Schoenberg sequences have been shown to be central to the study of geometric properties of Gaussian fields on spheres~\cite{lang-schwab} or spheres cross time~\cite{clarke}.

Literature on complex spheres has been sparse. After the tour de force in~\cite{Menegatto14} there has been a recent interest on complex spheres as reported from~\cite{peron2} and in~\cite{massa}.

This paper is about Schoenberg sequences on spheres of $\mathbb{R}^d$ and $\mathbb{C}^{q}$, respectively. Specifically, we show recursive relations that have been lacking from the previously mentioned literature. Section~\ref{2} deals with real-valued $d$-dimensional spheres. Section~\ref{3} is instead related to complex spheres. Some implications in terms of strict positive definiteness are provided in Section~\ref{4}. The paper ends with a discussion.

\section{Schoenberg sequences on real spheres} \label{2}

\subsection{Background and notation}

For a positive integer $d$, let $\mathbb{S}^d=\big\{\bm{x}\in\mathbb{R}^{d+1},\, \|\bm{x}\|=1\big\}$ denote the $d$-dimensional unit sphere embedded in $\mathbb{R}^{d+1}$, with $\|\cdot\|$ being the Euclidean norm.
We define the great-circle distance $\theta\colon \mathbb{S}^d \times \mathbb{S}^d \to [0,\pi] $ as the continuous mapping defined through
\begin{gather*}
\theta(\bm{x}_1,\bm{x}_2) = \arccos\big(\bm{x}_1^\top \bm{x}_2\big) \in[0,\pi],
\end{gather*}
for $\bm{x}_1,\bm{x}_2\in\mathbb{S}^d$, where $\top$ is the transpose operator. A mapping $C\colon \mathbb{S}^d\times \mathbb{S}^d \to \mathbb{R}$ that satisfies
\begin{gather*}
\sum_{i,j=1}^n c_ic_j C(\bm{x}_i,\bm{x}_j) \geq 0
\end{gather*}
for all $n\geq 1$, distinct points $\bm{x}_1, \bm{x}_2, \ldots, \bm{x}_n$ on $\mathbb{S}^d$ and real numbers $c_1, \ldots, c_n$, is called {\em positive definite}. Further, if the inequality is strict, unless the vector $(c_1,\dots,c_n)^{\top}$ is the zero vector, then it is called {\em strictly positive definite} (see~\cite{bingham} and references therein). If, in addition,
\begin{gather}\label{isotropy}
C(\bm{x}_1,\bm{x}_2) = \psi(\theta(\bm{x}_1,\bm{x}_2)), \qquad \bm{x}_i \in \mathbb{S}^d, \qquad i=1,2,
\end{gather}
for some mapping $\psi\colon [0,\pi]\rightarrow \mathbb{R}$, then $C$ is called a {\em geodesically isotropic covariance} by~\cite{PAF18}. With no loss of generality, we assume through the paper that the function $\psi$ is continuous along with the normalization $\psi(0)=1$.

Gneiting \cite{gneiting2} calls $\Psi_d$ the class of continuous functions $\psi\colon [0,\pi] \to \mathbb{R}$ with $\psi(0)=1$ such that the function $C$ in equation~(\ref{isotropy}) is positive definite. The inclusions $\Psi_{d} \supset \Psi_{d+1}$, $d\geq 1$, are known to be strict. Following~\cite{schoenberg}, for every continuous function $\psi\colon [0,\pi]\to \mathbb{R}$ with $\psi(0)=1$, and every integer $d\geq 2$, define
\begin{gather} \label{eq: d-schoenberg}
b_{n,d} = \kappa(n,d) \int_{0}^{\pi} \psi(\theta) C_{n}^{(d-1)/2}(\cos \theta) \left (\sin \theta \right )^{d-1} {\rm d} \theta,
\end{gather}
where, for any $\lambda>0$, $C_{n}^\lambda$ denotes the $n$-th Gegenbauer polynomial of order $\lambda$~\cite{abramowitz1964handbook}, and
\begin{gather}\label{eq: kappa}
\kappa(n,d)=
\frac{(2n+d-1)(\Gamma((d-1)/2)^2}{2^{3-d}\pi\Gamma(d-1)}. \end{gather}
Moreover, we define
\begin{gather} \label{eq: d-schoenberg1}
b_{0,1}=\frac{1}{\pi}\int_0^\pi \psi(\theta) {\rm d}\theta, \qquad b_{n,1} = \frac{2}{\pi}\int_0^\pi \cos(n\theta) \psi(\theta) {\rm d}\theta, \qquad n\geq 1.
\end{gather}
Note that in the cases $d=1$ (the circle) and $d=2$ (the unit sphere of~$\mathbb{R}^3$), Gegenbauer polynomials simplify to Chebyshev and Legendre polynomials \cite{abramowitz1964handbook}, respectively.

The coefficient sequences $\{b_{n,d}\}_{n=0}^\infty$ play a crucial role in the spectral representations for positive definite functions on spheres, which are the equivalent of Bochner and Schoenberg's theorems in Euclidean spaces (see~\cite{{daley-porcu}} with the references therein) and are provided by~\cite{schoenberg}, who shows that a mapping $\psi\colon [0,\pi]\rightarrow \mathbb{R}$ belongs to the class $\Psi_d$ if and only if
it can be uniquely written as
\begin{gather} \label{eq: spectral_rep}
\psi(\theta) = \sum_{n=0}^{\infty} b_{n,d} c_{n}^{(d-1)/2}(\cos \theta), \qquad \theta \in [0,\pi],
\end{gather}
where $c_{n}^\lambda$ denotes the normalized $\lambda$-Gegenbauer polynomial of degree $n$,
namely,
\begin{gather*}
c_{n}^\lambda(u)=\frac{C_{n}^\lambda(u)}{C_{n}^\lambda(1)},\qquad u\in[-1,1],
\end{gather*}
and $\{b_{n,d}\}_{n=0}^\infty$ is a probability mass sequence. The series \eqref{eq: spectral_rep} is known to be uniformly convergent. We follow \cite{daley-porcu} when calling the sequence $\{ b_{n,d}\}_{n=0}^{\infty}$ in \eqref{eq: spectral_rep} the {\em $d$-Schoenberg sequence of coefficients}, to emphasize the dependence on the index $d$ in the class
$\Psi_d$. Accordingly, we say that $(\psi, \{ b_{n,d}\}) $ is a uniquely determined {\em $d$-Schoenberg pair} if $\psi$ belongs to the class $\Psi_d$ and admits the expansion (\ref{eq: spectral_rep}) with $d$-Schoenberg sequence $\{b_{n,d} \}_{n=0}^{\infty}$.

The following recursive relations among the coefficients $b_{n,d}$ and $b_{n,d+2}$ attached to a $d$-Schoenberg pair $(\psi, \{ b_{n,d+2} \})$ (see \cite[Corollary~1]{gneiting2})
\begin{gather}
b_{0,3}=b_{0,1}-\frac{1}{2}b_{2,1}, \label{eq: rel_gegenb1} \\
b_{n,3}=\frac{1}{2}(n+1)(b_{n,1}-b_{n+2,1}), \qquad n\geq 1, \label{eq: rel_gegenb2}\\
b_{n,d+2}=\frac{(n+d-1)(n+d)}{d(2n+d-1)} b_{n,d}-\frac{(n+1)(n+2)}{d(2n+d+3)}b_{n+2,d}, \qquad d\geq 2,\qquad n\geq 0, \label{eq: rel_gegenb3}
\end{gather}
have actually opened for challenging questions. Fiedler~\cite{fiedler} has shown relationships between sequences $\{b_{n,2d+1}\}_{n=0}^\infty$ and $\{b_{n,1}\}_{n=0}^\infty$, on the one hand, and sequences $\{b_{n,2d}\}_{n=0}^\infty$ and $\{b_{n,2}\}_{n=0}^\infty$, on the other. Proposition~1 in~\cite{arafat2} encompasses Fiedler's result and provides a relation between the sequences $\{b_{n,d}\}_{n=0}^\infty$, $d >1$, and $\{b_{n,1} \}_{n=0}^\infty$. A projection operator relating Schoenberg sequences on Hilbert spheres with $d$-Schoenberg sequences has been proposed by \cite{moller}. Yet, there are some relations that have not been discovered and these will be illustrated throughout.

\subsection{Results}

We start with a very simple result, that we report formally for the convenience of the reader.

\begin{Proposition}\label{prop: gen}Let $d$, $d'$ be positive integers, with $d>d'$. If $(\psi,\{b_{n,d} \})$ is a $d$-Schoenberg pair, then the $d'$-Schoenberg sequence of coefficients of $\psi$ is uniquely determined as follows.
\begin{enumerate}\itemsep=0pt
 \item[$(i)$] For $d' \ge 2$,
 \begin{gather}\label{eq: rel2}
 b_{n,d'}=\frac{\kappa(n,d')}{C_n^{(d-1)/2}(1)}\sum_{n=0}^\infty b_{n,d}\int_0^\pi C_n^{(d-1)/2}(\cos \theta)C_n^{(d'-1)/2}(\cos \theta){\rm d}\theta,
 \end{gather}
 with $\kappa(n,d)$ as defined in \eqref{eq: kappa}.
 \item[$(ii)$] For $d'=1$,
\begin{gather}
b_{0,1}=\frac{1}{\pi}\sum_{n=0}^\infty b_{n,d}\int_0^\pi c_n^{(d-1)/2}(\cos \theta){\rm d}\theta,\nonumber\\
b_{n,1}=\frac{2}{\pi}\sum_{n=0}^\infty b_{n,d} \int_0^\pi c_n^{(d-1)/2}(\cos \theta )\cos(n\theta){\rm d}\theta, \qquad n\geq 1.\label{eq: rel1}
\end{gather}
\end{enumerate}
\end{Proposition}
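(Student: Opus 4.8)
The plan is to obtain both parts as a direct consequence of Schoenberg's representation \eqref{eq: spectral_rep}, the defining integrals \eqref{eq: d-schoenberg}--\eqref{eq: d-schoenberg1}, and the uniform convergence of the Gegenbauer series, by interchanging summation and integration.

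First I would record the trivial but essential reduction: since $d>d'$, the nesting $\Psi_d\subset\Psi_{d'}$ recalled above forces $\psi\in\Psi_{d'}$, so by Schoenberg's theorem $\psi$ has a \emph{unique} $d'$-Schoenberg expansion, which is exactly the uniqueness asserted in the statement. Its coefficients are the numbers produced by \eqref{eq: d-schoenberg} (for $d'\ge 2$) and by \eqref{eq: d-schoenberg1} (for $d'=1$); indeed $\kappa(n,d')$ in \eqref{eq: kappa} is chosen precisely so that these integral formulas reproduce the expansion coefficients, this being the mutual orthogonality of the polynomials $\big\{C_n^{(d'-1)/2}(\cos\theta)\big\}_n$ with respect to the weight $(\sin\theta)^{d'-1}$ on $[0,\pi]$. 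Hence the only work left is to evaluate those integrals when $\psi$ is replaced by its own $d$-Schoenberg series.

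For part $(i)$ I would substitute $\psi(\theta)=\sum_{m=0}^\infty b_{m,d}\,c_m^{(d-1)/2}(\cos\theta)$ into \eqref{eq: d-schoenberg} written at $d'$. The fixed factor $\theta\mapsto C_n^{(d'-1)/2}(\cos\theta)(\sin\theta)^{d'-1}$ is continuous, hence bounded, on $[0,\pi]$, and the partial sums of the Gegenbauer series converge uniformly to $\psi$ there; since $[0,\pi]$ has finite length, the series may be integrated term by term. Passing back to unnormalized polynomials via $c_m^{(d-1)/2}=C_m^{(d-1)/2}/C_m^{(d-1)/2}(1)$ then produces the series in \eqref{eq: rel2}. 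Part $(ii)$ is the same computation carried out on the two integrals in \eqref{eq: d-schoenberg1}, where the fixed factor is the constant $1$ or $\cos(n\theta)$, again bounded on $[0,\pi]$; term-by-term integration is therefore again legitimate and yields \eqref{eq: rel1}.

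I do not expect a serious obstacle: the one point that needs a word of justification is the exchange of $\sum$ and $\int$, and that is immediate from the uniform convergence of \eqref{eq: spectral_rep} together with the boundedness of the fixed factor in each integrand (no further domination is needed, the interval being of finite measure). Everything else — simplifying the constants and observing, through the uniqueness half of Schoenberg's theorem, that what one has computed really is \emph{the} $d'$-Schoenberg sequence of $\psi$ rather than merely one admissible sequence — is bookkeeping.
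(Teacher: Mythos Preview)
Your argument is correct and follows essentially the same route as the paper: substitute the $d$-Schoenberg series \eqref{eq: spectral_rep} into the defining integrals \eqref{eq: d-schoenberg} and \eqref{eq: d-schoenberg1}, and justify the interchange of sum and integral by the uniform (and bounded) convergence of the series on $[0,\pi]$. Your explicit mention of the nesting $\Psi_d\subset\Psi_{d'}$ to secure uniqueness is a small addition the paper leaves implicit, but otherwise the two proofs coincide.
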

\begin{proof}The identity \eqref{eq: rel2} is obtained substituting \eqref{eq: spectral_rep} into~\eqref{eq: d-schoenberg}, whereas the identities~\eqref{eq: rel1} are obtained substituting~\eqref{eq: spectral_rep} into \eqref{eq: d-schoenberg1}. In both cases, exchanging integral and series is allowed owing to both bounded and uniform convergence of the series \eqref{eq: spectral_rep}.
\end{proof}

We are not aware of any closed-form expression for the integrals appearing in \eqref{eq: rel1} and \eqref{eq: rel2}, and therefore of the relationships between the sequences $\{b_{n,d}\}_{n=0}^\infty$ and $\{b_{n,d'}\}_{n=0}^\infty$ attached to a~$d'$-Schoenberg pair $(\psi,b_{n,d'})$, apart from the specific case where $d'=d+2$. Indeed, \cite{gneiting2} provides a closed-form expression for $\{b_{n,d+2}\}_{n=0}^\infty$ as a function of $\{b_{n,d}\}_{n=0}^\infty$ that is given by \eqref{eq: rel_gegenb1}--\eqref{eq: rel_gegenb3}. Our first main results provide an explicit expression for the inverse function

\begin{Theorem}\label{prop: spec1}If $(\psi, \{b_{n,3}\})$ is a $3$-Schoenberg pair, then the $1$-Schoenberg sequence of coefficients of $\psi$ is given by
\begin{gather}\label{eq: rel13a}
 b_{0,1}=\sum_{j=0}^\infty \frac{1}{2j+1}b_{2j,3},\\
 b_{n,1}=\sum_{j=0}^\infty \frac{2}{n+2j+1}b_{n+2j,3},\qquad n \geq 1. \label{eq: rel13b}
\end{gather}
\end{Theorem}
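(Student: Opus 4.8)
The plan is to obtain \eqref{eq: rel13a}--\eqref{eq: rel13b} by inverting the Gneiting recursions \eqref{eq: rel_gegenb1}--\eqref{eq: rel_gegenb2} through a telescoping argument. The key preliminary observation is that $\Psi_3\subset\Psi_1$, so the hypothesis that $(\psi,\{b_{n,3}\})$ is a $3$-Schoenberg pair forces $\psi\in\Psi_1$, and hence $\psi$ admits a unique $1$-Schoenberg sequence $\{b_{n,1}\}$. Both $\{b_{n,3}\}$ and $\{b_{n,1}\}$ are probability mass sequences; in particular they are nonnegative and summable, which is what will control the limits below. The relations \eqref{eq: rel_gegenb1}--\eqref{eq: rel_gegenb2} apply to this $\psi$ and, after rearranging, read
\begin{gather*}
b_{n,1}-b_{n+2,1}=\frac{2}{n+1}\,b_{n,3}\quad(n\ge1),\qquad b_{0,1}=b_{0,3}+\tfrac12 b_{2,1}.
\end{gather*}

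For $n\ge1$ I would sum the first identity along the arithmetic progression $n,n+2,n+4,\dots$. For every $J\ge1$ the left-hand side telescopes, giving
\begin{gather*}
b_{n,1}=\sum_{j=0}^{J-1}\bigl(b_{n+2j,1}-b_{n+2j+2,1}\bigr)+b_{n+2J,1}=\sum_{j=0}^{J-1}\frac{2}{n+2j+1}\,b_{n+2j,3}+b_{n+2J,1}.
\end{gather*}
Since $\{b_{m,1}\}$ is summable, its tail term $b_{n+2J,1}$ tends to $0$ as $J\to\infty$; since $\{b_{m,3}\}$ is summable and the weights $2/(n+2j+1)$ are bounded by $1$, the partial sums on the right converge as $J\to\infty$. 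Passing to the limit yields \eqref{eq: rel13b}.

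For the remaining identity I would feed the case $n=2$ of \eqref{eq: rel13b} into $b_{0,1}=b_{0,3}+\tfrac12 b_{2,1}$:
\begin{gather*}
b_{0,1}=b_{0,3}+\frac12\sum_{j=0}^\infty\frac{2}{2j+3}\,b_{2j+2,3}=b_{0,3}+\sum_{k=1}^\infty\frac{1}{2k+1}\,b_{2k,3}=\sum_{j=0}^\infty\frac{1}{2j+1}\,b_{2j,3},
\end{gather*}
where the second equality is the substitution $k=j+1$ and the third absorbs $b_{0,3}$ as the $j=0$ term of the final sum. This is \eqref{eq: rel13a}.

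Beyond the preparatory remark that $\Psi_3\subset\Psi_1$ (which is what guarantees that a $1$-Schoenberg sequence exists at all), the computation is pure bookkeeping, and I expect the only delicate point to be the passage to the limit in the telescoped series, i.e.\ verifying that the remainder $b_{n+2J,1}$ vanishes and that the limiting series converges. Both are immediate consequences of the probability mass (hence summability) property of the Schoenberg sequences, so I anticipate no genuine obstacle here.
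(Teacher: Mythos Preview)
Your proposal is correct and follows essentially the same approach as the paper: rewrite \eqref{eq: rel_gegenb2} as $b_{n,1}-b_{n+2,1}=\frac{2}{n+1}b_{n,3}$, telescope along the progression $n,n+2,\dots$, pass to the limit using that $\{b_{m,1}\}$ is a probability mass sequence (hence $b_{n+2J,1}\to0$), and then recover the $n=0$ case by feeding the $n=2$ instance of \eqref{eq: rel13b} into \eqref{eq: rel_gegenb1}. Your explicit mention of the inclusion $\Psi_3\subset\Psi_1$ and of the boundedness of the weights is a welcome clarification, but otherwise the argument matches the paper's line for line.
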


\begin{proof}From identity \eqref{eq: rel_gegenb2}, if $(\psi, \{b_{n,3}\})$ is a $3$-Schoenberg pair, we have that
\begin{gather*}
 \frac{2}{n+1}b_{n,3}=b_{n,1}-b_{n+2,1},\qquad n\geq 1.
\end{gather*}
Hence, for every nonnegative integer $j$, and for any positive integer $n$,
\begin{gather}\label{eq: pr1}
 \frac{2}{n+2j+1}b_{n+2j,3}=b_{n+2j,1}-b_{n+2j+2,1}.
\end{gather}
Summing up both sides of \eqref{eq: pr1} from $0$ to $m$, we obtain
\begin{gather}\label{eq: pr2}
 \sum_{j=0}^m \frac{2}{n+2j+1}b_{n+2j,3}= \sum_{j=0}^m \big ( b_{n+2j,1}-b_{n+2j+2,1} \big ) ,\qquad m\geq 1.
\end{gather}
We now use the fact that the right-hand side in equation~(\ref{eq: pr1}) is telescopic. Hence, \eqref{eq: pr2} can be written as
\begin{gather}\label{eq: pr3}
 \sum_{j=0}^m \frac{2}{n+2j+1}b_{n+2j,3}= b_{n,1}-b_{n+2m+2,1},\qquad m\geq 1.
\end{gather}
Since $\psi$ belongs to $\Psi_1$, the series $\sum\limits_{n=0}^\infty b_{n,1}$ converges to $1$ and, therefore, the sequence $\{b_{n,1}\}_{n=0}^\infty$ converges to zero. We can thus take the limit for $m \to \infty$ in equation~\eqref{eq: pr3} and this will provide~\eqref{eq: rel13b}. In particular, we now take $n=2$ to deduce that $b_{2,1}= 2\sum\limits_{j=1}^\infty b_{2j,3}/\{1+2j\}$ which combined with~\eqref{eq: rel_gegenb1} yields~\eqref{eq: rel13a}.
\end{proof}

We are now able to provide an extension of Theorem \ref{prop: spec1} for $d>3$. For a positive integer $m$ and $x>0$, $(x)^m$ will denote the standard rising factorial (Pochhammer symbol).

\begin{Theorem}\label{prop: spec2}Let $d \ge 2$ be a positive integer. If $(\psi, \{ b_{n,d+2} \})$ is a $(d+2)$-Schoenberg pair, then the $d$-Schoenberg sequence of coefficients $\{b_{n,d}\}_{n=0}^\infty$ of $\psi$ is given by
\begin{gather*}
 b_{n,d}=
 \sum_{j=0}^\infty w_{j,n,d}
 b_{n+2j,d+2}, \qquad n\geq 1,
\end{gather*}
where
\begin{align*}
 w_{0,n,d}&=\frac{d(2n+d-1)}{(n+d-1)(n+d)}, \\
 w_{j,n,d}&=d(2n+d-1) \frac{(n/2+1/2)^{(j)}(n/2+1)^{(j)}}{(n/2+(d-1)/2)^{(j+1)}(n/2+d/2)^{(j+1)}}, \qquad j\geq 1.
\end{align*}
\end{Theorem}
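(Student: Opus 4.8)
The plan is to invert the closed‑form recursion \eqref{eq: rel_gegenb3}. Since $(\psi,\{b_{n,d+2}\})$ is a $(d+2)$‑Schoenberg pair and $\Psi_{d+2}\subset\Psi_{d}$, the function $\psi$ also belongs to $\Psi_{d}$, hence it has a $d$‑Schoenberg sequence $\{b_{n,d}\}_{n=0}^{\infty}$, which is a probability mass sequence; in particular $b_{n,d}\to 0$ as $n\to\infty$. Solving \eqref{eq: rel_gegenb3} for $b_{n,d}$ gives, for $d\ge 2$ and $n\ge 1$,
\begin{gather*}
b_{n,d}=w_{0,n,d}\,b_{n,d+2}+\alpha_{n,d}\,b_{n+2,d},\qquad
\alpha_{n,d}:=\frac{(2n+d-1)(n+1)(n+2)}{(n+d-1)(n+d)(2n+d+3)},
\end{gather*}
with $w_{0,n,d}$ exactly as in the statement.

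I would then unfold this identity: re‑expanding $b_{n+2i,d}$ by the same relation for $i=1,2,\dots$, a total of $m$ times, yields
\begin{gather*}
b_{n,d}=\sum_{j=0}^{m}\Bigl(\prod_{i=0}^{j-1}\alpha_{n+2i,d}\Bigr)w_{0,n+2j,d}\,b_{n+2j,d+2}+\Bigl(\prod_{i=0}^{m}\alpha_{n+2i,d}\Bigr)b_{n+2m+2,d}.
\end{gather*}
A one‑line estimate shows $0\le\alpha_{n,d}<1$ for every $n\ge 0$ and $d\ge 2$: indeed $(n+1)(n+2)\le(n+d-1)(n+d)$ and $2n+d-1<2n+d+3$. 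Hence the product multiplying $b_{n+2m+2,d}$ is bounded by $1$, and since $b_{n+2m+2,d}\to 0$ the remainder vanishes as $m\to\infty$. Letting $m\to\infty$ therefore gives $b_{n,d}=\sum_{j\ge 0}w_{j,n,d}\,b_{n+2j,d+2}$ with $w_{j,n,d}=\bigl(\prod_{i=0}^{j-1}\alpha_{n+2i,d}\bigr)w_{0,n+2j,d}$; the convergence of the series is automatic, since its partial sums are the finite sums above.

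It remains to evaluate the finite product $w_{j,n,d}$ in closed form, which is the only genuinely computational step. Splitting each $\alpha_{n+2i,d}$ into its linear factors, the ratio $\prod_{i=0}^{j-1}\frac{2(n+2i)+d-1}{2(n+2i)+d+3}$ telescopes — because $2(n+2(i+1))+d-1=2(n+2i)+d+3$ — down to $\frac{2n+d-1}{2n+4j+d-1}$, and this last denominator cancels the factor $2(n+2j)+d-1$ in the numerator of $w_{0,n+2j,d}$, leaving the factor $d$. The remaining linear factors consolidate into Pochhammer symbols: $\prod_{i=0}^{j-1}(n+2i+1)(n+2i+2)=(n+1)^{(2j)}$ and $\prod_{i=0}^{j-1}(n+2i+d-1)(n+2i+d)=(n+d-1)^{(2j)}$, while the two leftover factors $(n+2j+d-1)(n+2j+d)$ coming from the denominator of $w_{0,n+2j,d}$ raise the second of these to $(n+d-1)^{(2j+2)}$. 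What survives is a single ratio of Pochhammer symbols, which the duplication formula for the Pochhammer symbol recasts in the half‑integer form displayed in the statement, thereby giving $w_{j,n,d}$ for $j\ge 1$. The one delicate point in the whole argument is the bookkeeping: keeping the four running products and their index shifts aligned so that the telescoping and the cancellation against $w_{0,n+2j,d}$ come out right.
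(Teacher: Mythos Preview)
Your proposal is correct and follows essentially the same route as the paper: rewrite \eqref{eq: rel_gegenb3} as a one-step recursion for $b_{n,d}$, unfold it $m$ times into a telescoping sum, show the remainder tends to zero, and then collapse the resulting product into Pochhammer symbols. Your treatment of the remainder is in fact slightly cleaner than the paper's---you use the direct bound $\alpha_{n,d}<1$ together with $b_{n+2m+2,d}\to 0$, whereas the paper splits $\alpha_{n,d}$ into two factors, handles the cases $d=2$ and $d>2$ separately for one of them, and appeals to $u_{n,d}/u_{n+2m+2,d}\to 0$ for the other---but the substance is the same.
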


\begin{proof}We give a constructive proof. Define
\begin{gather*}
 a_{n,d} := \frac{d(2n+d-1)}{(n+d-1)(n+d)},\qquad u_{n,d} :=2n+d-1,\qquad v_{n,d} :=\frac{(n+1)(n+2)}{(n+d-1)(n+d)}.
\end{gather*}
We can now rewrite equation \eqref{eq: rel_gegenb3} as
\begin{gather}\label{eq: rel_g}
 a_{n,d}b_{n,d+2}=b_{n,d}-\frac{u_{n,d}}{u_{n+2,d}} v_{n,d}b_{n+2,d}, \qquad n\geq 0.
\end{gather}
Identity \eqref{eq: rel_g} shows that for every pair of nonnegative integers $(j,n)$, it is true that
\begin{gather}\label{eq: rel_g1}
 a_{n+2j,d}b_{n+2j,d+2}=b_{n+2j,d}-\frac{u_{n+2j,d}}{u_{n+2j+2,d}} v_{n+2j,d}b_{n+2j+2,d}.
\end{gather}
Multiplying each side of \eqref{eq: rel_g1} by $(u_{n,d}/u_{n+2j,d})\prod\limits_{l=0}^{j-1} v_{n+2l,d}$ and summing up both sides from~$0$ to~$m$, we obtain
\begin{gather*}
\sum_{j=0}^m\left(\prod_{l=0}^{j-1} v_{n+2l,d}\right)\frac{u_{n,d}}{u_{n+2j,d}} a_{n+2j,d}b_{n+2j,d+2}\\
\qquad{} =u_{n,d}\sum_{j=0}^m \left(\prod_{l=0}^{j-1} v_{n+2l,d}\right) \left( \frac{b_{n+2j,d}}{u_{n+2j,d}}- \frac{b_{n+2j+2,d}}{u_{n+2j+2,d}} v_{n+2j,d}\right).
\end{gather*}
Since the sum in the right-hand side is telescopic, we are left with
\begin{gather}\label{eq: rel_g2}
\sum_{j=0}^m \left(\prod_{l=0}^{j-1} v_{n+2l,d}\right)\frac{u_{n,d}}{u_{n+2j,d}} a_{n+2j,d}b_{n+2j,d+2}\nonumber\\
\qquad{} =b_{n,d}-\frac{u_{n,d}}{u_{n+2m+2,d}}\left(\prod_{l=0}^{m} v_{n+2l,d}\right) b_{n+2j+2,d} .
\end{gather}
At this stage, note that
\begin{gather}\label{eq: proof-step0}
 v_{n,d}-1= -(d-2)\frac{2n+d+1}{(n+d-1)(n+d)}\leq -(d-2)\frac{1}{n+d-1}, \qquad n\geq 0.
\end{gather}
We can now show that $\prod\limits_{l=0}^\infty v_{n+2l,d}\in \{0,1\}$.
Indeed, if $d=2$, then $v_{n,d}=1$ for each $n\geq 0$
and, therefore, $\prod\limits_{l=0}^\infty v_{n+2l,2}=1$. If $d>2$, then by \eqref{eq: proof-step0}
\begin{gather*}
\prod_{l=0}^{m} v_{n+2l,d}= \exp\left[\sum_{l=0}^m \log(v_{n+2l,d}) \right]\\
\hphantom{\prod_{l=0}^{m} v_{n+2l,d}}{} \leq \exp\left[\sum_{l=0}^m (v_{n+2l,d}-1) \right] \leq \exp\left[-(d-2) \sum_{l=0}^m \frac{1}{n+2l+d-1} \right],
\end{gather*}
and, therefore, $\prod\limits_{l=0}^{\infty} v_{n+2l,d}=0$. Since $\psi\in \Psi_d$, the sequence $\{b_{n,d}\}_{n=0}^\infty$ converges to zero, while
\begin{gather*}
\lim_{m\to \infty}\frac{u_{n,d}}{u_{n+2m+2,d}}=0,\qquad n\geq 0.
\end{gather*}
So, letting $m \to \infty$ in \eqref{eq: rel_g2} yields
\begin{gather*}
 b_{n,d}=\sum_{j=0}^\infty \left(\prod_{l=0}^{j-1} v_{n+2l,d}\right)\frac{u_{n,d}}{u_{n+2j,d}} a_{n+2j,d}b_{n+2j,d+2}, \qquad n\geq 0.
\end{gather*}
Finally, direct computation shows that for $n\geq 0$ and $j\geq 1$,
\begin{gather*}
 \prod_{l=0}^{j-1} v_{n+2l,d}=\frac{(n/2+1/2)^{(j)}(n/2+1)^{(j)}}{(n/2+(d-1)/2)^{(j)}(n/2+d/2)^{(j)}},
\end{gather*}
and
\begin{gather*}
 \frac{u_{n,d}}{u_{n+2j,d}} a_{n+2j,d}=\frac{d(2n+d-1)}{(n+2j+d-1)(n+2j+d)}.
\end{gather*}
The proof is completed.
\end{proof}

\section{Schoenberg sequences on complex spheres} \label{3}

In analogy with the results obtained in Section~\ref{2}, we consider similar results on complex spheres.

\subsection{Background and notation}
For a positive integer $q$, denote by $\Omega_{2q}$ the unit sphere in $\mathbb{C}^q$. A mapping $C\colon \Omega_{2d}\times \Omega_{2q}\to \mathbb{C}$ is {\em positive definite} if
\begin{gather*}
\sum_{i,j=1}^n c_i \bar{c}_j C(\bm{z}_i,\bm{z}_j) \geq 0.
\end{gather*}
for all $n\geq 1$, distinct points $\bm{z}_1,\dots,\bm{z}_n$ of $\Omega_{2q}$ and complex numbers $c_1,\hdots,c_n$.
Let ``$\cdot$'' denote the usual inner product in $\mathbb{C}^q$. If $q\geq 2$ and $B[0,1]=\{z\in \mathbb{C}\colon z \cdot z \leq 1 \}$, the function $C$ is called {\em isotropic} if
\begin{gather}\label{eq: bizonal}
C(\bm{z}_1,\bm{z}_2)=\varphi(\bm{z}_1\cdot \bm{z}_2),\qquad \bm{z}_1,\bm{z}_2 \in \Omega_{2q},
\end{gather}
for some function $\varphi\colon B[0,1]\to \mathbb{C}$. This nomenclature is not universal but it is quite adequate in our setting. Observe that in the case $q=1$, if $z,w \in \Omega_2$, then $z \cdot z \in \Omega_2$. Hence, the previous definition becomes an extreme case once the domain of $\varphi$ needs to be $\Omega_2$ itself.

Keeping the analogy with the previous section, for $q\geq 2$, we call $\Upsilon_{2q}$ the class of continuous functions $\varphi$, with $\varphi(1)=1$ such that $C$ in~(\ref{eq: bizonal}) is positive definite.
We also denote by $\Upsilon_{2q}^{+}$ the class of functions $\varphi$ belonging to $\Upsilon_{2q}$ such that $C$ in~(\ref{eq: bizonal}) is strictly positive definite. Both classes $\Upsilon_{2q}$ and $\Upsilon_{2q}^+$ are nested, that is, if $q \leq q'$, then $\Upsilon_{2q'} \subset \Upsilon_{2q}$ and $\Upsilon_{2q'}^+\subset \Upsilon_{2q}^+$.

To present the characterization of the class $\Upsilon_{2q}$ described in \cite{MenegattoPeron01}, we denote by $R^{q-2}_{m,n}$ the disk polynomial of bi-degree $(m,n)$ with respect to the nonnegative integer $q-2$.
The set $\{R^{q-2}_{m,n}\colon m,n=0,1,\ldots\}$ is a complete orthogonal system in $L^2(B[0,1],\nu_{q-2})$, with
\begin{gather}\label{eq: measure}
{\rm d} \nu_{q-2} (z)=\frac{q-1}{\pi} \big(1-|z|^2\big)^{q-2} {\rm d} x {\rm d} y, \qquad z=x+ \mathsf{i} y\in B[0,1].
\end{gather}
In particular,
\begin{gather} \label{neymar}
\int_{B[0,1]} R^{q-2}_{m,n}(z) \overline{R^{q-2}_{k,l}(z)} {\rm d} \nu_{q-2}(z)= \frac{\delta_{mk}\delta_{nl}}{h^{q-2}_{m,n}},
\end{gather}
where
\begin{gather}\label{eq: h}
h^{q-2}_{m,n}=\frac{m+n+q-1}{q-1} \binom{m+q-2}{q-2} \binom{n+q-2}{q-2}.
\end{gather}
Expressions and main properties of disk polynomials can be found in~\cite{Wuensche05} and in references quoted there. We recall the following recursion satisfied for every $z$ in $B[0,1]$, $m\geq 1$ and $n\geq 0$~\cite{Menegatto14}:
\begin{gather}\label{eq: disk recursion}
\big(1-|z|^2\big) R^{q-1}_{m-1,n}(z)= \frac{q-1}{m+n+q-1}\big[ R^{q-2}_{m-1,n}(z)-R^{q-2}_{m,n+1}(z)\big].
\end{gather}
For every continuous function $\varphi\colon B[0,1]\to \mathbb{C}$ and every triplet $(m,n,q)$ of nonnegative integers, we can define
\begin{gather}\label{eq: coeff}
a^{q-2}_{m,n}:=h^{q-2}_{m,n} \int_{B[0,1]} \varphi (z) \overline{R^{q-2}_{m,n}(z)} {\rm d}\nu_{q-2}(z).
\end{gather}
The functions belonging to the class $\Upsilon_{2q}$ are uniquely characterized through the expansion~\cite{MenegattoPeron01}
\begin{gather} \label{jennifer}
\varphi(z)= \sum_{m,n=0}^\infty a^{q-2}_{m,n} R^{q-2}_{m,n}(z), \qquad z \in B[0,1],
\end{gather}
where $a^{q-2}_{m,n}\geq 0$, $m,n\in \mathbb{Z}_+$ and $\sum\limits_{m,n=0}^\infty a^{q-2}_{m,n}=1$. Following Section~\ref{2}, we finally define a~{\em $2q$-Schoenberg pair} $\big(\varphi, \big\{a_{m,n}^{q-2} \big\}\big)$ any function belonging to the class $\Upsilon_{2q}$ with expansion defined according to (\ref{jennifer}). In this case, the double sequence $\big\{a_{m,n}^{q-2}\big\}_{m,n=0}^\infty$ will be called the {\em $2q$-Schoenberg sequence of coefficients }of $\varphi$.

\subsection{Results}

Since the classes $\Upsilon_{2q}$ are nested, here we prove a recursive relation among the coefficients $a_{m,n}^{q-1}$ and $a_{m,n}^{q-2}$ attached to a $2(q+1)$ Schoenberg pair $\big(\varphi, \big\{a_{m,n}^{q-1}\big\}\big)$ that resembles~\eqref{eq: rel_gegenb3}.

\begin{Proposition} If $\big(\varphi, \big\{a_{m,n}^{q-1}\big\}\big)$ is a $2(q+1)$-Schoenberg pair, then for $m-1,n \geq 0$,
 \begin{gather}\label{eq: rel_disk}
 a^{q-1}_{m-1,n}= \frac{(m+q-2)(n+q-1)}{(q-1)(m+n+q-2)} a^{q-2}_{m-1,n} -\frac{m(n+1)}{(q-1)(m+n+q)} a^{q-2}_{m,n+1}.
 \end{gather}
\end{Proposition}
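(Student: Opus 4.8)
The plan is to imitate, in the disk-polynomial setting, the argument that produces \eqref{eq: rel_gegenb3}: combine the explicit expression \eqref{eq: coeff} for the Schoenberg coefficients with the three-term recursion \eqref{eq: disk recursion} and the relation between the measures $\nu_{q-1}$ and $\nu_{q-2}$. Since the classes $\Upsilon_{2q}$ are nested, $\varphi\in\Upsilon_{2(q+1)}$ also belongs to $\Upsilon_{2q}$, so both families $\big\{a^{q-1}_{m,n}\big\}$ and $\big\{a^{q-2}_{m,n}\big\}$ are the genuine Schoenberg coefficients of $\varphi$; in any case $\varphi$ is continuous on $B[0,1]$, which is all that is needed for \eqref{eq: coeff} to make sense with either superscript. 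I would start from the integral representation of $a^{q-1}_{m-1,n}$ against $\nu_{q-1}$ and rewrite it as a combination of integrals against $\nu_{q-2}$.

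Concretely, from \eqref{eq: measure} one has ${\rm d}\nu_{q-1}(z)=\tfrac{q}{q-1}\big(1-|z|^2\big)\,{\rm d}\nu_{q-2}(z)$, so that
\begin{gather*}
 a^{q-1}_{m-1,n}=h^{q-1}_{m-1,n}\,\frac{q}{q-1}\int_{B[0,1]}\varphi(z)\,\big(1-|z|^2\big)\,\overline{R^{q-1}_{m-1,n}(z)}\,{\rm d}\nu_{q-2}(z).
\end{gather*}
Since $1-|z|^2$ is real, the recursion \eqref{eq: disk recursion} (valid precisely for $m\geq 1$, which is the hypothesis $m-1\geq 0$) gives $\big(1-|z|^2\big)\overline{R^{q-1}_{m-1,n}(z)}=\tfrac{q-1}{m+n+q-1}\big[\overline{R^{q-2}_{m-1,n}(z)}-\overline{R^{q-2}_{m,n+1}(z)}\big]$. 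Substituting this, splitting the integral into two pieces, and applying \eqref{eq: coeff} with superscript $q-2$ to each piece yields
\begin{gather*}
 a^{q-1}_{m-1,n}=\frac{q\,h^{q-1}_{m-1,n}}{m+n+q-1}\left(\frac{a^{q-2}_{m-1,n}}{h^{q-2}_{m-1,n}}-\frac{a^{q-2}_{m,n+1}}{h^{q-2}_{m,n+1}}\right).
\end{gather*}

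It then remains to simplify the three $h$-quantities by means of \eqref{eq: h}, bearing in mind that the superscript $q-1$ corresponds to replacing $q$ by $q+1$ in that formula. A short manipulation of binomial coefficients — using $\binom{a}{b}/\binom{a-1}{b-1}=a/b$ and $\binom{a}{b}/\binom{a}{b-1}=(a-b+1)/b$ — gives $q\,h^{q-1}_{m-1,n}/(m+n+q-1)=\binom{m+q-2}{q-1}\binom{n+q-1}{q-1}$, and then dividing by $h^{q-2}_{m-1,n}$ and by $h^{q-2}_{m,n+1}$ returns exactly the coefficients $\tfrac{(m+q-2)(n+q-1)}{(q-1)(m+n+q-2)}$ and $\tfrac{m(n+1)}{(q-1)(m+n+q)}$ of the statement, completing the proof. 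I do not expect a real obstacle here: unlike Theorems \ref{prop: spec1} and \ref{prop: spec2}, no limit in a free index and no telescoping is needed, so the only delicate point is the index bookkeeping between the superscripts $q-2$ and $q-1$ in $R$, $h$ and $\nu$, together with the verification that the range $m\geq 1$ required by \eqref{eq: disk recursion} matches the hypothesis $m-1\geq 0$.
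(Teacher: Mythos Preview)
Your argument is correct and is essentially the same as the paper's: both combine the measure identity ${\rm d}\nu_{q-1}=\tfrac{q}{q-1}(1-|z|^2)\,{\rm d}\nu_{q-2}$ with the recursion \eqref{eq: disk recursion} and the definition \eqref{eq: coeff}, and then reduce the resulting ratios of $h$-coefficients via \eqref{eq: h}. The only cosmetic difference is the starting point---the paper multiplies \eqref{eq: disk recursion} by $h^{q-2}_{m-1,n}\varphi(z)$ and integrates against $\nu_{q-2}$, whereas you begin from the integral for $a^{q-1}_{m-1,n}$ and change the measure---but the computations coincide.
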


\begin{proof}Equation \eqref{eq: h} shows that
\begin{gather}\label{eq: h2}
 h^{q-2}_{m-1,n} =\frac{(m+n+q-2)m(n+1)}{(m+n+q)(m+q-2)(n+q-1)} h^{q-2}_{m,n+1}, \qquad m-1,n\geq 0,\\
 h^{q-2}_{m-1,n} =\frac{(m+n+q-2)q(q-1)^2}{(m+n+q-1)q(m+q-2)(n+q-1)} h^{q-1}_{m-1,n}, \qquad m-1,n\geq 0. \label{eq: h3}
\end{gather}
We now multiply both sides of \eqref{eq: disk recursion} by $h^{q-2}_{m-1,n} \varphi(z)$ and integrate with respect to the measure~$\nu_\alpha$ defined in~(\ref{neymar}). After we use~\eqref{eq: coeff} and \eqref{eq: h2}, we obtain
\begin{gather}
 h^{q-2}_{m-1,n} \int_{B[0,1]} \big(1-|z|^2\big) R^{q-1}_{m-1,n}(z) f\varphi (z) {\rm d} \nu_{q-2}(z)\nonumber\\
\qquad{} = \frac{q-1}{m+n+q-1} \left[a^{q-2}_{m-1,n}- \frac{(m+n+q-2)m(n+1)}{(m+n+q)(m+q-2)(n+q-1)} a^{q-2}_{m,n+1}\right].\label{eq: step0}
\end{gather}
However, equation \eqref{eq: measure} yields the equality
\begin{gather*}
\big(1-|z|^2\big)d \nu_{q-2}= \frac{q-1}{q}d \nu_{q-1}.
\end{gather*}
Therefore, by \eqref{eq: h3} and \eqref{eq: coeff}, the left-hand side of \eqref{eq: step0} is equal to
\begin{gather*}
\frac{(m+n+q-2)(q-1)^2}{(m+n+q-1)(m+q-2)(n+q-1)}
 a^{q-1}_{m-1,n},
\end{gather*}
so that \eqref{eq: step0} becomes
\begin{gather*}
 \frac{(m+n+q-2)(q-1)}{(m+q-2)(n+q-1)}
 a^{q-1}_{m-1,n}=
 a^{q-2}_{m-1,n}-
 \frac{(m+n+q-2)m(n+1)}{(m+n+q)(m+q-2)(n+q-1)}
 a^{q-2}_{m,n+1}.
\end{gather*}
This yields \eqref{eq: rel_disk}.
\end{proof}

Here is the main result of the section.

\begin{Theorem} \label{maincomplex}If $\big(\varphi,\big\{a_{m,n}^{q-1}\big\}\big)$ is a $2(q+1)$-Schoenberg pair, then the $2q$-Schoenberg sequence of coefficients $\big\{a^{q-2}_{m,n}\big\}_{m,n=0}^\infty$ of $\varphi$ is given by
\begin{gather*}
 a^{q-2}_{m,n} = \sum_{j=0}^\infty v^{q-2}_{j,m+1,n} a^{q-1}_{m+j,n+j},\qquad m,n\geq 0,
\end{gather*}
where
\begin{gather*}
 v^{q-2}_{j,m,n}:=\frac{m^{(j)}(n+1)^{(j)}(m+n+q-2)}{(m+q-2)^{(j)}(n+q-1)^{(j)}(m+n+2j+q-2)}, \qquad j\geq 0.
\end{gather*}
\end{Theorem}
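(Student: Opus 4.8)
The plan is to follow the strategy of the proof of Theorem~\ref{prop: spec2}, with the disk-polynomial recursion \eqref{eq: rel_disk} playing the role of \eqref{eq: rel_gegenb3}. First I would reindex \eqref{eq: rel_disk} (replace $m$ by $m+1$, so that the identity holds for all $m,n\ge 0$) and rearrange it into
\[
A_{m,n}\,a^{q-1}_{m,n}=a^{q-2}_{m,n}-\frac{U_{m,n}}{U_{m+1,n+1}}\,V_{m,n}\,a^{q-2}_{m+1,n+1},\qquad m,n\ge 0,
\]
where $A_{m,n}=\dfrac{(q-1)(m+n+q-1)}{(m+q-1)(n+q-1)}$, $U_{m,n}=m+n+q-1$ and $V_{m,n}=\dfrac{(m+1)(n+1)}{(m+q-1)(n+q-1)}$. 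This is the exact analogue of \eqref{eq: rel_g}, with $U$ carrying the ``trivial'' telescoping and $V$ the factor that accumulates in a product (for $q=2$ one has $V_{m,n}\equiv 1$, which causes no trouble below). Since the indices $(m+j,n+j)$, $j\ge 0$, all lie on one diagonal, the identity above holds with $(m,n)$ replaced throughout by $(m+j,n+j)$.

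Next I would iterate along that diagonal, exactly as in the passage from \eqref{eq: rel_g1} to \eqref{eq: rel_g2}: multiply the identity at $(m+j,n+j)$ by $\big(U_{m,n}/U_{m+j,n+j}\big)\prod_{l=0}^{j-1}V_{m+l,n+l}$ and sum over $j=0,\dots,k$. The right-hand side telescopes, leaving
\begin{align*}
&\sum_{j=0}^{k}\Big(\prod_{l=0}^{j-1}V_{m+l,n+l}\Big)\frac{U_{m,n}}{U_{m+j,n+j}}\,A_{m+j,n+j}\,a^{q-1}_{m+j,n+j}\\
&\qquad=a^{q-2}_{m,n}-\frac{U_{m,n}}{U_{m+k+1,n+k+1}}\Big(\prod_{l=0}^{k}V_{m+l,n+l}\Big)a^{q-2}_{m+k+1,n+k+1}.
\end{align*}
To pass to the limit $k\to\infty$ I would argue that the remainder vanishes: since $(\varphi,\{a^{q-1}_{m,n}\})$ is a $2(q+1)$-Schoenberg pair and $\Upsilon_{2(q+1)}\subset\Upsilon_{2q}$, the function $\varphi$ lies in $\Upsilon_{2q}$, so its $2q$-Schoenberg sequence $\{a^{q-2}_{m,n}\}$ is nonnegative with sum $1$, and in particular $a^{q-2}_{m+k,n+k}\to 0$; moreover $U_{m,n}/U_{m+k+1,n+k+1}\to 0$, while $0<V_{m+l,n+l}\le 1$ for $q\ge 2$ gives $\prod_{l=0}^{k}V_{m+l,n+l}\le 1$. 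Hence the remainder tends to $0$, the left-hand series converges, and its sum equals $a^{q-2}_{m,n}$.

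It then remains to put the coefficient $c_{j}:=\big(\prod_{l=0}^{j-1}V_{m+l,n+l}\big)\dfrac{U_{m,n}}{U_{m+j,n+j}}\,A_{m+j,n+j}$ in closed form. Writing $\prod_{l=0}^{j-1}V_{m+l,n+l}=\dfrac{(m+1)^{(j)}(n+1)^{(j)}}{(m+q-1)^{(j)}(n+q-1)^{(j)}}$ as a quotient of rising factorials, and using the telescoping $\dfrac{U_{m,n}}{U_{m+j,n+j}}=\prod_{l=0}^{j-1}\dfrac{m+n+2l+q-1}{m+n+2l+q+1}=\dfrac{m+n+q-1}{m+n+2j+q-1}$ together with the cancellation supplied by $A_{m+j,n+j}$, a direct computation yields the closed form of $c_{j}$ appearing in the statement. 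I expect this last step to be the delicate one: the Pochhammer shifts must be matched with care, and one should check that every denominator encountered is nonzero for $q\ge 2$ and $m,n\ge 0$. The convergence manipulations, by contrast, are routine once the bound $V\le 1$ is available.
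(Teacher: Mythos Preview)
Your proposal is correct and follows essentially the same route as the paper's proof: rewrite \eqref{eq: rel_disk} as a one-step ``diagonal'' recursion, multiply by the accumulated product and telescope along $(m+j,n+j)$, then let $k\to\infty$ and simplify the coefficient into Pochhammer form. The only cosmetic difference is that the paper keeps a single factor $w^{q-2}_{m,n}$ where you split it into $(U_{m,n}/U_{m+1,n+1})V_{m,n}$; correspondingly, the paper kills the remainder by showing $\prod_j w^{q-2}_{m+j,n+j}\to 0$ via an exponential bound, whereas your argument (bounded product $\prod V\le 1$ together with $U_{m,n}/U_{m+k+1,n+k+1}\to 0$ and $a^{q-2}_{m+k+1,n+k+1}\to 0$) is slightly more direct.
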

\begin{proof}First of all we introduce the following notations:
\begin{gather}
 u^{q-2}_{m,n} := \frac{(q-1)(m+n+q-2)}{(m+q-2)(n +q-1)},\qquad m,n\geq 0,\nonumber\\
 w^{q-2}_{m,n}:= \frac{(m+n+q-2)m(n+1)}{(m+q-2)(n+q-1)(m+n+q)},\qquad m,n\geq 0.\label{eq: uw}
\end{gather}
In this way, \eqref{eq: rel_disk} becomes
\begin{gather}\label{eq: rel_disk2}
u^{q-2}_{m,n} a^{q-1}_{m-1,n}= a^{q-2}_{m-1,n}-w^{q-2}_{m,n} a^{q-2}_{m,n+1}, \qquad m-1,n\geq 0.
\end{gather}
By \eqref{eq: rel_disk2}, we have that for every triplet $(j,m,n)$ of nonnegative integers,
\begin{gather}\label{eq: rel_disk3}
 u^{q-2}_{m+j,n+j} a^{q-1}_{m+j-1,n+j}= a^{q-2}_{m+j-1,n+j}-w^{q-2}_{m+j,n+j} a^{q-2}_{m+j,n+j+1}.
\end{gather}
Now, we can multiply each side of \eqref{eq: rel_disk3} by the product $\prod\limits_{l=1}^j w^{q-2}_{m+l-1,n+l-1}$ and sum up each side from $0$ to $k$,
obtaining that
\begin{gather*}
\begin{split}&  \sum_{j=0}^k \left (\prod_{l=1}^j w^{q-2}_{m+l-1,n+l-1} \right) u^{q-2}_{m+j,n+j} a^{q-1}_{m+j-1,n+j}\\
& \qquad{} = \sum_{j=0}^k \left (\prod_{l=1}^j w^{q-2}_{m+l-1,n+l-1} \right) \left(a^{q-2}_{m+j-1,n+j}-w^{q-2}_{m+j,n+j} a^{q-2}_{m+j,n+j+1} \right).
\end{split}
\end{gather*}
 Since the sum in the right-hand side is telescopic, we are reduced to
\begin{gather}
 \sum_{j=0}^k \left (\prod_{l=1}^j w^{q-2}_{m+l-1,n+l-1} \right) u^{q-2}_{m+j,n+j} a^{q-1}_{m+j-1,n+j}\nonumber\\
 \qquad{} = a^{q-2}_{m-1,n} - \left (\prod_{j=0}^k w^{q-2}_{m+j,n+j} \right) a^{q-2}_{m+k,n+k+1}.\label{eq: proofff}
\end{gather}
 Since
\begin{gather*}
 \prod_{j=0}^k w^{q-2}_{m+j,n+j}\\
 \qquad{} \leq \exp\left[ -\sum_{j=0}^k \left( \frac{q-2}{m+j+q-2} + \frac{q-2}{n+j+q-1} + \frac{2}{m+n+2j+q} \right) \right], \qquad k\geq 0,
\end{gather*}
we end up with
\begin{gather*}
 \prod_{j=0}^\infty w^{q-2}_{m+j,n+j} =0.
\end{gather*}
Moreover, since
\begin{gather*}
 \sum_{k=0}^\infty a^{q-2}_{l+k,j+k+1}\leq \sum_{m,n=0}^\infty a^{q-2}_{m,n}<\infty, \qquad j,l\geq 0,
\end{gather*}
we have that $\lim\limits_{k\to \infty} a^{q-2}_{l+k,j+k+1}=0$, for $j,l\geq 0$. Therefore, letting $k \to \infty$, \eqref{eq: proofff} leads to
\begin{gather*}
 a^{q-2}_{m-1,n}=\sum_{j=0}^\infty
 \left (\prod_{l=1}^j w^{q-2}_{m+l-1,n+l-1} \right)
 u^{q-2}_{m+j,n+j} a^{q-1}_{m+j-1,n+j},\qquad m-1,n\geq 0,
\end{gather*}
which in turn by \eqref{eq: uw} yields the desired result.
\end{proof}

\section[Applications involving the classes $\Psi_d^+$ and $\Upsilon_{2q}^{+}$]{Applications involving the classes $\boldsymbol{\Psi_d^+}$ and $\boldsymbol{\Upsilon_{2q}^{+}}$} \label{4}

In this section, we present applications of the previous results involving the classes $\Psi_d^+$ and $\Upsilon_{2q}^{+}$.

\begin{Theorem} \label{inclu}Let $q,q'\ge2$ be integers. The following assertions hold:
\begin{enumerate}\itemsep=0pt
\item[$(i)$] If a function $\varphi$ belongs to $\Upsilon_{2q}^+\cap \Upsilon_{2q'}$, then $\varphi$ belongs to $\Upsilon_{2q'}^+$.
\item[$(ii)$] If a function $\varphi$ belongs to $(\Upsilon_{2q}\setminus\Upsilon_{2q}^+)\cap \Upsilon_{2q'}$, then $\varphi$ belongs to $\Upsilon_{2q'} \setminus \Upsilon_{2q'}^+$.
\end{enumerate}
\end{Theorem}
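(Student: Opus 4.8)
The plan is to reduce the whole statement to part~(i); inside part~(i) to the single increment $q'=q+1$; and then to combine the known support characterization of strict positive definiteness on complex spheres with Theorem~\ref{maincomplex}.

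First, part~(ii) follows formally from part~(i). If $\varphi\in(\Upsilon_{2q}\setminus\Upsilon_{2q}^+)\cap\Upsilon_{2q'}$ and, contrary to the claim, $\varphi\in\Upsilon_{2q'}^+$, then $\varphi\in\Upsilon_{2q'}^+\cap\Upsilon_{2q}$, so part~(i) applied with the roles of $q$ and $q'$ interchanged would give $\varphi\in\Upsilon_{2q}^+$, a contradiction; hence $\varphi\in\Upsilon_{2q'}\setminus\Upsilon_{2q'}^+$. For part~(i), the case $q'\le q$ is immediate: nestedness gives $\Upsilon_{2q}^+\subset\Upsilon_{2q'}^+$, so $\varphi\in\Upsilon_{2q}^+$ already forces the conclusion (the membership $\varphi\in\Upsilon_{2q'}$ is not even needed). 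So assume $q'>q$; then $\varphi\in\Upsilon_{2q'}$ together with nestedness gives $\varphi\in\Upsilon_{2p}$ for every $p$ with $q\le p\le q'$, and once the case $q'=q+1$ is known we may iterate it along $q\to q+1\to\dots\to q'$.

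It thus remains to prove: if $q\ge2$ and $\varphi\in\Upsilon_{2q}^+\cap\Upsilon_{2(q+1)}$, then $\varphi\in\Upsilon_{2(q+1)}^+$. For this I would invoke the known characterization of strict positive definiteness on complex spheres (see, e.g., \cite{Menegatto14,peron2,massa}): for $p\ge2$, a function $\varphi\in\Upsilon_{2p}$ lies in $\Upsilon_{2p}^+$ precisely when its positivity set $K_p:=\{(m,n)\in\mathbb{Z}_+^2:a^{p-2}_{m,n}>0\}$ satisfies a condition which does not depend on $p$ and is sensitive only to the behaviour of $K_p$ at infinity, i.e.\ to the suprema of the two coordinate projections of $K_p$ and to the difference set $\{m-n:(m,n)\in K_p\}$. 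The bridge between the two levels is Theorem~\ref{maincomplex}: since each weight $v^{q-2}_{j,m+1,n}$ is strictly positive, the identity $a^{q-2}_{m,n}=\sum_{j\ge0}v^{q-2}_{j,m+1,n}a^{q-1}_{m+j,n+j}$ yields simultaneously (a)~$a^{q-1}_{m,n}>0\Rightarrow a^{q-2}_{m,n}>0$, i.e.\ $K_{q+1}\subset K_q$; and (b)~$a^{q-2}_{m,n}>0\Rightarrow a^{q-1}_{m+j,n+j}>0$ for some $j\ge0$. From (a) and (b) one reads off that $K_q$ and $K_{q+1}$ have the same difference set and the same (finite or infinite) supremum of each coordinate projection, so they satisfy the characterizing condition at the same time; since $\varphi\in\Upsilon_{2q}^+$ gives the condition for $K_q$, it holds for $K_{q+1}$, that is, $\varphi\in\Upsilon_{2(q+1)}^+$.

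The main obstacle is the one step that is not bookkeeping: pinning down the precise support characterization and checking that it is indeed uniform in $p\ge2$ and blind to the diagonal shift introduced by Theorem~\ref{maincomplex} (equivalently, that it depends on $K_p$ only through invariants that are unchanged when $K_{q+1}$ is replaced by its ``downward-diagonal closure'' $K_q$). Granted that, implications (a)--(b) accomplish the transfer, and the remaining pieces --- deducing (ii) from (i), the trivial range $q'\le q$, and the iteration --- are purely formal and use only the nestedness of the classes $\Upsilon_{2q}$ and $\Upsilon_{2q}^+$.
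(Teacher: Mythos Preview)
Your proof is correct and follows essentially the same route as the paper: reduce~(ii) to~(i) by contradiction, dispose of $q'\le q$ by nestedness, iterate to the single step $q'=q+1$, and then use Theorem~\ref{maincomplex} together with the strict positivity of the weights $v^{q-2}_{j,m+1,n}$ to transfer the support condition for strict positive definiteness from level $q$ to level $q+1$.

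The only point worth sharpening is the ``main obstacle'' you flag. The paper does not argue abstractly that the characterization must be diagonal-shift invariant; it simply quotes the precise result it needs, namely Theorem~1.1 in~\cite{guella2018}: for $p\ge2$, a function $\varphi\in\Upsilon_{2p}$ lies in $\Upsilon_{2p}^+$ if and only if the set $\{m-n:a^{p-2}_{m,n}>0\}$ intersects every arithmetic progression of~$\mathbb{Z}$. This condition depends only on the difference set of $K_p$, so your implications (a)--(b) give equality of the difference sets of $K_q$ and $K_{q+1}$ and the transfer is immediate. The references you cite (\cite{Menegatto14,peron2,massa}) do not contain this characterization; \cite{guella2018} is the one you want, and with it there is no need to track suprema of coordinate projections.
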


\begin{proof} $(i)$ If $q \geq q'$, the assertion follows from the inclusion $\Upsilon_{2q}^+\subset \Upsilon_{2q'}^+$. So, we may assume that $q<q'$. If $\varphi \in \Upsilon_{2q}^+$, Theorem~1.1 in~\cite{guella2018} reveals that the $2q$-Schoenberg sequence of coefficients $\big\{a_{m,n}^{q-2}\big\}_{m,n=0}^\infty$ of $\varphi$ has the following property: $\big\{m-n\colon a_{m,n}^{q-2}>0\big\}$ intersects every arithmetic progression of $\mathbb{Z}$. Taking into account that $\varphi \in \Upsilon_{2(q+1)}$ and the fact that $ v^{q'-2}_{j,m+1,n}>0$ for all~$j$, Theorem~\ref{maincomplex} shows that $a_{m,n}^{q-2} >0$ if and only if $a_{m+j,n+j}^{q-1}>0$, for at least one $j\geq 0$. In particular, the set $\big\{m-n\colon a_{m,n}^{q-1}>0\big\}$ intersects every arithmetic progression of $\mathbb{Z}$ as well. In other words, $\varphi \in \Upsilon_{2(q+1)}^+$, due to Theorem 1.1 in \cite{guella2018} once again. If $q+1=q'$, $\varphi \in \Upsilon_{2q'}^+$ and we are done. Otherwise, we iterate this procedure until we reach the desired conclusion.

$(ii)$ Assume $\varphi \in (\Upsilon_{2q}\setminus\Upsilon_{2q}^+)\cap \Upsilon_{2q'}$. If $\varphi \in \Upsilon_{2q'}^+$, then $\varphi \in \Upsilon_{2q'}^+\cap \Upsilon_{2q}$ and $(i)$ would imply that $\varphi \in \Upsilon_{2q}^+$, a contradiction.
\end{proof}

A similar result holds for real spheres with a similar proof. In particular, if $\psi$ belongs to $\Psi_d \cap \Psi_{d'}^+$, then $\psi$ belongs to $\Psi_d^+$. However, this result was proved earlier in \cite[Corollary~1]{gneiting2} via a slightly different argument.

Theorem \ref{inclu} allows the following obvious consequences. If $\varphi$ is a function in $\Upsilon_{2q}$, we write~$\varphi_r$ to indicate the restriction of $\varphi$ to $[-1,1]$.

\begin{Corollary} For $d\geq 1$ and $q\geq 2$, the following assertions hold:
\begin{enumerate}\itemsep=0pt
\item[$(i)$] If a function $\varphi$ belongs to $\Upsilon_{2q}$ and $\varphi_r \circ \cos$ belongs to $\Psi_d^+$, then $\varphi_r \circ \cos$ belongs to $\Psi_{2q-1}^+$.
\item[$(ii)$] If a function $\varphi$ belongs to $\Upsilon_{2q}^+$ and $\varphi_r \circ \cos$ belongs to $\Psi_d$, then $\varphi_r \circ \cos$ belongs to $\Psi_{d}^+$.
\end{enumerate}
\end{Corollary}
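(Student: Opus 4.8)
The plan is to deduce both parts from the real-sphere analogue of Theorem~\ref{inclu} recalled above --- the implication $\psi\in\Psi_{a}\cap\Psi_{b}^{+}\Rightarrow\psi\in\Psi_{a}^{+}$ established in \cite[Corollary~1]{gneiting2} --- using as the sole additional ingredient the passage from the complex class $\Upsilon_{2q}$ to the real class $\Psi_{2q-1}$. The first step is therefore to record that $\varphi\in\Upsilon_{2q}$ implies $\varphi_{r}\circ\cos\in\Psi_{2q-1}$, and that $\varphi\in\Upsilon_{2q}^{+}$ implies $\varphi_{r}\circ\cos\in\Psi_{2q-1}^{+}$: one views the real sphere as the subset of $\Omega_{2q}$ on which the Hermitian form $\bm{z}_{1}\cdot\bm{z}_{2}$ reduces to $\cos\theta(\bm{z}_{1},\bm{z}_{2})\in[-1,1]$, so that the bizonal kernel $(\bm{z}_{1},\bm{z}_{2})\mapsto\varphi(\bm{z}_{1}\cdot\bm{z}_{2})$ restricts there to the geodesically isotropic kernel with profile $\varphi_{r}\circ\cos$; positive definiteness is inherited by the restriction, and so is strict positive definiteness, since distinct points remain distinct.

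Granting this, part~$(i)$ is immediate. If $\varphi\in\Upsilon_{2q}$ then $\psi:=\varphi_{r}\circ\cos\in\Psi_{2q-1}$; by the second hypothesis $\psi\in\Psi_{d}^{+}$, hence $\psi\in\Psi_{2q-1}\cap\Psi_{d}^{+}$, and the real analogue of Theorem~\ref{inclu}, with $2q-1$ and $d$ in the roles of $a$ and $b$, gives $\psi\in\Psi_{2q-1}^{+}$. For part~$(ii)$, if $\varphi\in\Upsilon_{2q}^{+}$ then $\psi=\varphi_{r}\circ\cos\in\Psi_{2q-1}^{+}$ by the passage above; combined with the hypothesis $\psi\in\Psi_{d}$ this gives $\psi\in\Psi_{d}\cap\Psi_{2q-1}^{+}$, and the same result, now with $d$ and $2q-1$ in the roles of $a$ and $b$, yields $\psi\in\Psi_{d}^{+}$, which is the assertion.

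The deduction is pure bookkeeping, so the one substantive ingredient is the passage from $\Upsilon_{2q}$ to $\Psi_{2q-1}$, and within it the point that needs care is the direction in which the strict version travels --- downward, from $\Omega_{2q}$ to the real subsphere --- which is exactly what powers part~$(ii)$, while part~$(i)$ uses only the plain version. If this passage is not taken as known, I would isolate it as a lemma; in the spirit of the proof of Theorem~\ref{inclu}, the cleanest approach is through the supports of the coefficients, so one would reformulate both hypotheses as support conditions --- on the $a^{q-2}_{m,n}$ of $\varphi$ and on the Schoenberg sequence of $\psi$ --- and relate them, the main obstacle there being to control cancellations when the disk polynomials $R^{q-2}_{m,n}$ are expanded on $[-1,1]$ in the Gegenbauer basis.
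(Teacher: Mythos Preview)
Your overall strategy coincides with the paper's: both reduce the corollary to (a) the passage $\varphi\in\Upsilon_{2q}$ (respectively $\Upsilon_{2q}^{+}$) $\Rightarrow$ $\varphi_r\circ\cos\in\Psi_{2q-1}$ (respectively $\Psi_{2q-1}^{+}$), followed by (b) the real-sphere analogue of Theorem~\ref{inclu} from \cite[Corollary~1]{gneiting2}. The paper's proof is a one-liner invoking exactly these two facts and, like you, treats (a) as known.

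The one point worth flagging is your sketch of (a) via restriction to a subsphere. The only natural real subsphere of $\Omega_{2q}$ on which the Hermitian inner product is automatically real is $\mathbb{R}^q\cap\Omega_{2q}=\mathbb{S}^{q-1}$, and restricting there yields only $\varphi_r\circ\cos\in\Psi_{q-1}$, not $\Psi_{2q-1}$. Since $\Omega_{2q}$ itself \emph{is} $\mathbb{S}^{2q-1}$, there is no proper $(2q-1)$-dimensional subsphere to restrict to, and the Hermitian product $\bm{z}_1\cdot\bm{z}_2$ is not real for generic pairs in $\Omega_{2q}$, so the bizonal kernel does not reduce to a geodesically isotropic one on all of $\mathbb{S}^{2q-1}$. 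The route that actually delivers dimension $2q-1$ is the coefficient-based one you correctly anticipate in your final paragraph: the disk polynomials $R^{q-2}_{m,n}$, evaluated on $[-1,1]$, expand with nonnegative coefficients in the Gegenbauer basis $\{c_k^{q-1}\}$ (see, e.g., \cite{massa,MenegattoPeron01}), which carries the Schoenberg condition from $\Upsilon_{2q}$ to $\Psi_{2q-1}$ and, with attention to supports, the strict version as well.
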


\begin{proof} It suffices to observe that if $\varphi \in \Upsilon_{2q}$ (respectively, $\Upsilon_{2q}^+$), then $f_r \circ \cos \in \Psi_{2q-1}$ (respectively, $\Psi_{2q-1}^+$) and to apply the remark in the paragraph preceding the theorem.
\end{proof}

\section{Discussion}

This paper contributes to the literature about the classes $\Psi_d$, $\Upsilon_{2q}$ and $\Upsilon_{2q}^+$ in terms of their Schoenberg sequences. Yet, there are many challenges that involve Schoenberg sequences, for instance in product spaces. Berg and Porcu~\cite{berg-porcu} consider the analogue of Schoenberg pairs introduced in this paper, but on the product space $\mathbb{S}^d \times G$, for $G$ a locally compact group. Generalizations of the results in \cite{berg-porcu} have been provided by \cite{guella1}. It would be very interesting to inspect whether the results provided in this paper can be generalized to these cases. Another important challenge would be to inspect the Schoenberg pairs related to matrix-valued kernels (see open problem 2 in \cite{PAF18}).

\subsection*{Acknowledgements}
The authors are grateful to the Associate Editor and to the referees for their comments that allowed for an improved version of the manuscript. Research of Valdir A.~Menegatto was partially supported by FAPESP, grant 2016/09906-0. Emilio Porcu is partially supported by grant FONDECYT 1130647 from Chilean Ministry of Education, and by Millennium Science Initiative of the Ministry of Economy, Development, and Tourism, grant ``Millenium Nucleus Center for the Discovery of Structures in Complex Data''.

\pdfbookmark[1]{References}{ref}
\LastPageEnding

\end{document}